\newtheorem{teo}{Theorem}
\newtheorem{lemma}[teo]{Lemma}
\newtheorem{coro}[teo]{Corollary}
\newcommand{\R}{\mathbb{R}}
\newcommand{\ii}{\mathrm{i}}
\newcommand{\eps}{\epsilon}
\newcommand{\N}{\mathbb{N}}
\newcommand{\C}{\mathbb{C}}
\newcommand{\CP}{\mathbb{C}\textrm{P}}
\newcommand{\RP}{\mathbb{R}\mathrm{P}}
\newcommand{\Z}{\mathbb{Z}}
\theoremstyle{remark} 
\newtheorem{remark}[]{Remark}
\newtheorem{example}[]{Example}
\title[A real viewpoint on the intersection of complex quadrics]{A real viewpoint on the intersection of complex quadrics and its topology}
\author{ A. Lerario}
\thanks{SISSA, Trieste}
\begin{document}

\maketitle

\begin{abstract} 
We study the relation between a complex projective set $C\subset \CP^{n}$ and the set $R\subset \RP^{2n+1}$ defined by viewing each equation of $C$ as a pair of real equations. Once $C$ is presented by \emph{quadratic} equations, we can apply a spectral sequence to efficiently compute the homology of $R;$ using the fact that the $\Z_{2}$-cohomology of $R$ is a free $H^{*}(C)$-module with two generators we can in principle reconstruct the homology of $C.$ Explicit computations for the intersection of two complex quadrics are presented.\end{abstract}

\section{Introduction}
Given a projective algebraic set $C\subset\CP^{n}$ we are interested in the computation of its $\Z_{2}$-Betti numbers. The approach we propose is that of studying first the topology of the set  $R\subset \RP^{2n+1},$ defined by viewing each equation of $C$ as a pair of real equations, and then recover the Betti numbers of $C$ from those of $R$ using the formula $b_{j}(C)=\sum_{k=0}^{j}(-1)^{k}b_{j-k}(R).$ In the case $C$ is cut by quadrics, the same is true for $R$ and the homology of the former can be computed using the spectral sequence discussed in \cite{AgLe}. If $C$ is the intersection of two quadrics the computations are quite easy and we devote the last section to perform some of them. \\
In the case $C$ is a \emph{complete} intersection of two quadrics, its geometric properties are studied in \cite{Reid}. The classification of pencil of complex quadrics is a well known fact and involves rather complicated algebraic stuff; the reader can see \cite{Ho} for a classical algebraic treatment or \cite{Di} for a more geometric approach. The reduction to a canonical form for a pair of quadrics is studied in \cite{Th}.\\
Our interest is only in the \emph{rough} topology of $C$ and thus our approach is far from the previous analytic ones, but since we do not make regularity assumptions it includes the treatment of very degenerate objects.

\section{Remarks on real and complex projective sets}

We start by considering the bundle
$$ S^{1}\to \RP^{2n+1}\stackrel{\pi}{\longrightarrow} \CP^{n}$$
where the map $\pi$ is given by $[x_{0},y_{0},\ldots,x_{n},y_{n}]\mapsto [x_{0}+iy_{0},\ldots, x_{n}+iy_{n}].$ 
The fiber of $\pi$ over a point $[v]\in \CP^{n}$ equals the projectivization of the two dimensional real vector space $\textrm{span}_{\R}\{v,iv\}\subset \C^{n+1}\simeq \R^{2n+2}.$ Thus $\RP^{2n+1}$ is the total space of the projectivization of the tautological bundle $O(-1)\to\CP^{n}$ view as a rank two real vector bundle. Applying Leray-Hirsch we get a cohomology class $x\in H^{1}(\RP^{2n+1};\Z_{2}),$ which restricts to a generator of the cohomology of each fiber, such that the map $\alpha \otimes p(x)\mapsto \pi^{*} \alpha \smile p(x),$ where $p\in \Z_{2}[x]/(x^{2})$ and $\alpha\in H^{*}(\CP^{n};\Z_{2})= \Z_{2}[\alpha]/(\alpha^{n+1}),$ gives an isomorphism of $H^{*}(\CP^{n};\Z_{2})$-modules
$$H^{*}(\CP^{n};\Z_{2})\otimes \{1,x\}\simeq H^{*}(\RP^{2n+1};\Z_{2}).$$ 
In particular this tells that $\pi^{*}$ is injective with image the even dimensional cohomology (recall that $|\alpha|=2$).\\
The following geometric description of the map $\pi$ also gives an alternative proof of the previous statement.
Consider the restriction of $\pi$ to $\{[x_{0},y_{0},x_{1},0,\ldots,0]\}\simeq \RP^{2}:$ we see that it maps $\RP^{2}$ to $\{[z_{0},z_{1},0,\ldots,0]\}\simeq \CP^{1}$ trough a homeomoprhism $\{x_{1}\neq0\}\simeq \{z_{1}\neq 0\}$ and by collapsing the line at infinity $\{x_{1}=0\}$ to the point $[1,0,\ldots,0].$ It follows that the modulo 2 degree of $\pi|_{\RP^{2}}$ is one. Using the isomorphism $H^{*}(\RP^{2n+1};\Z_{2})\simeq \Z_{2}[\beta]/(\beta^{2n+2}), \, |\beta|=1,$ we see that $$\pi^{*}:H^{*}(\CP^{n};\Z_{2})\to H^{*}(\RP^{2n+1};\Z_{2})$$ is given by $\alpha\mapsto \beta^{2},$ where $\beta|_{\RP^{2}}$ generates $H^{1}(\RP^{2};\Z_{2}).$
If we consider the Gysin sequence with $\Z_{2}$ coefficients for $\pi,$ then the injectivity of $\pi^{*}$ implies that for every $j$ the following portion of the sequence is exact $$0\to H^{j}(\CP^{n};\Z_{2})\stackrel{\pi^{*}}{\longrightarrow}H^{j}(\RP^{2n+1};\Z_{2})\to H^{j-1}(\CP^{n};\Z_{2})\stackrel{\smile  e}{\longrightarrow} 0$$ where $e=e(\pi)$ is the modulo 2 euler class of $\pi$, which of course turns out to be zero.\\
Let now $I\subset \C[z_{0},\ldots, z_{n}]$ be a homogeneous ideal; we will denote by $C=C(I)$ its zero locus in $\CP^{n}.$ If we restrict the bundle $O(-1)\to \CP^{n}$ to $C$ we get a bundle:
$$\begin{tikzpicture}[xscale=2, yscale=1.5]

    \node (A0_0) at (0, 0) {$C$};
    \node (A1_0) at (1, 0) {$\CP^{n}$};
    \node (A0_1) at (0, 1) {$E$};
    \node (A1_1) at (1, 1) {$O(-1)$};
    \path (A0_0) edge [->] node [auto] {$$} (A1_0);
    \path (A0_1) edge [->] node [auto,swap ] {$$} (A0_0);
    \path (A0_1) edge [->] node [auto] {} (A1_1);
    \path (A1_1) edge [->] node [auto] {$$} (A1_0);
      \end{tikzpicture}
$$
and if we consider the previous as rank two real vector bundles and take their projectivization we get:
$$\begin{tikzpicture}[xscale=2, yscale=1.5]

    \node (A0_0) at (0, 0) {$C$};
    \node (A1_0) at (1, 0) {$\CP^{n}$};
    \node (A0_1) at (0, 1) {$R$};
    \node (A1_1) at (1, 1) {$\RP^{2n+1}$};
    \node (F) at (-0.7,1) {$\RP^{1}$};
    \path (A0_0) edge [->] node [auto] {$i_{C}$} (A1_0);
    \path (A0_1) edge [->] node [auto,swap ] {$\pi|_{R}$} (A0_0);
    \path (A0_1) edge [->] node [auto] {$i_{R}$} (A1_1);
    \path (A1_1) edge [->] node [auto] {$\pi$} (A1_0);
    \path (F) edge [->] node [auto] {} (A0_1);
      \end{tikzpicture}
$$
where $i_{R}$ and $i_{C}$ are the inclusion maps.\\
It is clear that $R$ is an algebraic subset of $\RP^{2n+1}$ whose equations are given by considering each polynomial $f\in I$ as a pair of polynomials $f^{a}=\textrm{Re}(f),f^{b}=\textrm{Im}(f)\in \R[x_{0},y_{0},\ldots,x_{n},y_{n}].$ Applying Leray-Hirsch to $\pi|_{R},$  or the the identity $e(\pi|_{R})=e(\pi)|_{C}=0,$ we get the isomoprhism of $H^{*}(C;\Z_{2})$-modules:
$$H^{*}(C;\Z_{2})\otimes \{1, x|_{R}\}\simeq H^{*}(R, \Z_{2}).$$
The previous isomorphism allows us to compute $\Z_{2}$-Betti numbers of $C$ once those of $R$ are known, via the following formula:
$$b_{j}(C;\Z_{2})=\sum_{k=0}^{j}(-1)^{k}b_{j-k}(R;\Z_{2}).$$
We have the following equalities for the Stiefel-Whitney classes of $E,$ which come from the fact that $E$ is the realification of a complex bundle: $w_{2k}(E)=c_{k}(E)\,\textrm{mod}\,2,$ where $c_{k}$ is the $k$-th Chern class of $E$ seen as a complex bundle, and $w_{2k+1}(E)=0.$\\
Since $E$ has real rank two we have:
$$w_{2}(E)=i_{C}^{*}z \quad\textrm{and}\quad w_{i}(E)=0, \, i\neq 0,2,$$
where $z$ is the generator of $H^{2}(\CP^{n},\Z_{2})$ and we have used the equalities $w_{2}(E)=c_{1}(E)=i_{C}^{*}c_{1}(O(-1))=i_{C}^{*}z.$ \\
The following lemma relates the homomorphisms $i_{C}^{*}$ and $i_{R}^{*}$.
\begin{lemma}\label{coogen}
There exists an odd $r$ such that $(i_{R}^{*})_{k}:H^{k}(\RP^{2n+1};\Z_{2})\to H^{k}(R;\Z_{2})$ is injective for $k\leq r$ and zero for $k>r.$ Moreover for every $k$ we have
$$\textrm{rk}(i_{C}^{*})_{2k}=\textrm{rk}(i_{R}^{*})_{2k}=\textrm{rk}(i_{R}^{*})_{2k+1}.$$
\end{lemma}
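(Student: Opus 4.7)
The plan is to combine the Leray--Hirsch structure of the fibration $\pi|_{R}\colon R\to C$ with the relation $\beta^{2}=\pi^{*}(z)$ already established in the excerpt.

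First I would observe that $H^{*}(\RP^{2n+1};\Z_{2})=\Z_{2}[\beta]/(\beta^{2n+2})$ is generated in degree $k$ by the single class $\beta^{k}$, so $(i_{R}^{*})_{k}$ is either zero or injective according to whether $i_{R}^{*}(\beta^{k})=(i_{R}^{*}\beta)^{k}$ vanishes. Since $(i_{R}^{*}\beta)^{k+1}$ is a multiple of $(i_{R}^{*}\beta)^{k}$, the set of $k$ for which this power vanishes is upward closed, so the ``injective for $k\leq r$, zero for $k>r$'' dichotomy holds automatically for some $r$. The genuine content of the lemma is therefore the parity of $r$ together with the three-way equality of ranks.

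Next I would restrict the identity $\beta^{2}=\pi^{*}(z)$ to $R$, obtaining $(i_{R}^{*}\beta)^{2}=(\pi|_{R})^{*}(i_{C}^{*}z)$, and iterate to get
$$(i_{R}^{*}\beta)^{2k}=(\pi|_{R})^{*}\bigl((i_{C}^{*}z)^{k}\bigr),\qquad (i_{R}^{*}\beta)^{2k+1}=(\pi|_{R})^{*}\bigl((i_{C}^{*}z)^{k}\bigr)\smile i_{R}^{*}\beta.$$
By Leray--Hirsch applied to $\pi|_{R}$ (with vanishing mod-$2$ Euler class, as noted in the text), $H^{*}(R;\Z_{2})$ is a free $H^{*}(C;\Z_{2})$-module on $\{1,i_{R}^{*}\beta\}$ and in particular $(\pi|_{R})^{*}$ is injective. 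Consequently, both displayed expressions vanish if and only if $(i_{C}^{*}z)^{k}=0$ in $H^{2k}(C;\Z_{2})$.

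Since $H^{*}(\CP^{n};\Z_{2})=\Z_{2}[z]/(z^{n+1})$, the rank of $(i_{C}^{*})_{2k}$ is $0$ or $1$ according to whether $(i_{C}^{*}z)^{k}$ is zero or not, which gives the equality $\textrm{rk}(i_{C}^{*})_{2k}=\textrm{rk}(i_{R}^{*})_{2k}=\textrm{rk}(i_{R}^{*})_{2k+1}$. For the parity of $r$, the observation that $(i_{R}^{*}\beta)^{2k}$ and $(i_{R}^{*}\beta)^{2k+1}$ are controlled by the \emph{same} condition $(i_{C}^{*}z)^{k}\neq 0$ means they vanish together; hence the largest $k$ with $(i_{R}^{*}\beta)^{k}\neq 0$ has the form $r=2s+1$, where $s$ is the largest integer with $(i_{C}^{*}z)^{s}\neq 0$ (existing because $s=0$ works whenever $C\neq\emptyset$). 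There is no serious obstacle here once $\beta^{2}=\pi^{*}z$ and the Leray--Hirsch module structure are in hand; the proof is essentially bookkeeping around these two facts.
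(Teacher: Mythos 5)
Your proof is correct and takes essentially the same route as the paper: the first part is the identical cup-product argument, and the second part rests on the same key relation $y^{2}=(\pi|_{R})^{*}(i_{C}^{*}z)$ for $y=i_{R}^{*}\beta$, followed by the same computation of $y^{2k}$ and $y^{2k+1}$. The only cosmetic differences are that you obtain this relation by restricting $\beta^{2}=\pi^{*}z$ rather than from the projective-bundle relation for $R=P(E)\to C$, and that you make explicit the Leray--Hirsch freeness of $H^{*}(R;\Z_{2})$ over $H^{*}(C;\Z_{2})$ that the paper leaves implicit in its closing ``the conclusion follows.''
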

\begin{proof}
Let $a$ be such that $(i_{R}^{*})_{a}\equiv 0;$ then using the cup product structure of $H^{*}(\RP^{2n+1};\Z_{2})=\Z_{2}[\beta]/(\beta^{2n+2}),$ we have
$$i_{R}^{*}\beta^{a+k}=i_{R}^{*}\beta^{a}\smile i_{R}^{*}\beta^{k}=0.$$
For the second part of the statement notice that $R=P(E)\stackrel{i_{R}}{\longrightarrow}\RP^{2n+1}$ is linear on the fibres and thus, letting $y=i_{R}^{*}\beta$ we have $y^{2}=(w_{2}(E)+w_{1}(E)y)=w_{2}(E)y$ (since $w_{1}(E)$ is zero), where we interpret  $w_{i}(E)$ as a class on $R$ via $\pi|_{R}^{*}.$ It follows that
$$y^{2k}=w_{2}(E)^{k}\quad \textrm{and} \quad y^{2k+1}=w_{2}(E)^{k}y.$$
On the other hand, since $w_{2}(E)=i_{C}^{*}z,$ then the conclusion follows.
\end{proof} 
\section{The quadratic case}
In this section we study the topology of $R$ in the case $C$ is cut by quadrics, i.e.
$$C=V_{\CP^{n}}(q_{0},\ldots, q_{l}),\quad q_{0},\ldots, q_{l}\in \C[z_{0},\ldots, z_{n}]_{(2)}$$
For a given $q\in\C[z_{0},\ldots, z_{n}]_{(2)},\, q(z)=z^{T}Qz$ with $Q=A-iB$ and  $A,B\in \textrm{Sym}(n+1,\R)$ we define the symmetric matrix 
$$P=\left( \begin{smallmatrix}A&B\\B&-A\end{smallmatrix}\right).$$
We set $J=\left( \begin{smallmatrix}0&I\\-I&0\end{smallmatrix}\right)$  (it is a $(n+2)\times(n+2)$ matrix) and given $q_{0},\ldots, q_{l}\in \C[z_{0},\ldots,z_{n}]_{(2)}$ we define $p:S^{2l+1}\to\textrm{Sym}(2n+2,\R)$ by
$$(a_{0},b_{0},\ldots, a_{l},b_{l})\stackrel{p}{\mapsto}a_{0}P_{0}-b_{0}JP_{0}+\cdots+a_{l}P_{l}-b_{l}JP_{l}.$$
For every polynomial $f\in \C[z_{0},\ldots, z_{n}]$ recall that we have defined the polynomials $f^{a},f^{b}\in \R[x_{0},y_{0},\ldots, x_{n},y_{n}]$ by
$$f^{a}(x,y)=\textrm{Re}(f)(x+iy),\quad \quad f^{b}(x,y)=\textrm{Im}(f)(x+iy).$$
Thus if $C=V_{\CP^{n}}(q_{0},\ldots, q_{l})$, we have
$$R=V_{\RP^{2n+1}}(q_{0}^{a},q_{0}^{b},\ldots, q_{l}^{a},q_{l}^{b})$$
We easily see that $\ii^{+}(a_{0}q_{0}^{a}+b_{0}q_{0}^{b}+\cdots+a_{l}q_{l}^{a}+b_{l}q_{l}^{b})=\ii^{+}(p(a_{0},b_{0},\ldots, b_{l},q_{l})):$ this is simply because $P_{j}$ and $-JP_{j}$ are the symmetric matrices associated respectively to the quadratic forms $q_{j}^{a}$ and $q_{j}^{b}.$\\
Following \cite{AgLe} for every $j\in \N$ we define 
$$\Omega^{j}=\{\alpha \in S^{2l+1}\, |\, \ii^{+}(p(\alpha))\geq j\}$$
and if we let $B$ be the unit ball in $\R^{2l+2},\, \partial B=S^{2l+1}$ we recall the existence of a first quadrant spectral sequence $(E_{r}, d_{r})_{r\geq 0}$ such that: $$(E_{r}, d_{r})\Rightarrow H_{2n+1-*}(R;\Z_{2}),\quad E_{2}^{i,j}=H^{i}(B, \Omega^{j+1};\Z_{2}).$$
For $j\in \N$ if we let $P^{j}\subset S^{2l+1}\subset \C^{l+1}$ be defined by
$$P^{j}=\{(\alpha_{0},\ldots, \alpha_{l})\in S^{2l+1}\, |\, \textrm{rk}_{\C}(\alpha_{0}q_{0}+\ldots+\alpha_{l}q_{l})\geq j\}$$
we can rewrite Theorem A of \cite{AgLe} in the following more natural way.
\begin{teo}\label{teospec}There exists a cohomology spectral sequence of the first quadrant $(E_{r},d_{r}),$ converging to $H_{2n+1-*}(R, \Z_{2})$ such that
$E_{2}^{i,j}=H^{i}(B, P^{j+1};\Z_{2}).$
\end{teo}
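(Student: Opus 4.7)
The plan is to deduce the statement directly from Theorem~A of \cite{AgLe}, which already produces a first-quadrant cohomology spectral sequence $(E_r,d_r)$ converging to $H_{2n+1-*}(R;\Z_2)$ with $E_2^{i,j}=H^i(B,\Omega^{j+1};\Z_2)$. So it is enough to exhibit, for every $j$, a self-homeomorphism of the pair $(B,S^{2l+1})$ carrying $\Omega^{j+1}$ onto $P^{j+1}$. The candidate is (up to a global complex conjugation adjusting sign conventions) the natural $\R$-linear isomorphism $\R^{2l+2}\simeq \C^{l+1}$ sending $(a_0,b_0,\ldots,a_l,b_l)$ to $(a_0-\ii b_0,\ldots,a_l-\ii b_l)$: a routine block computation, using $\R$-linearity and the fact already noted before the statement that $q_j^a$ and $q_j^b$ have real symmetric matrices $P_j$ and $-JP_j$, shows that under this identification $p(a_0,b_0,\ldots,b_l)$ coincides with the $(2n+2)\times(2n+2)$ real symmetric matrix attached by the rule $Q\leadsto P$ recalled above to the complex quadratic form $q_\alpha=\alpha_0q_0+\cdots+\alpha_l q_l$.

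The reduction therefore hinges on a single linear-algebraic identity,
$$
\ii^+(P)=\textrm{rk}_\C(Q),
$$
for an arbitrary complex symmetric $Q=A-\ii B$ and its real companion $P=\bigl(\begin{smallmatrix}A&B\\B&-A\end{smallmatrix}\bigr)$; this is the main (and really only) obstacle. I would prove it in two moves. First, a direct computation gives $JP+PJ=0$, and since $J^2=-I$ the map $v\mapsto Jv$ swaps the $\lambda$- and $(-\lambda)$-eigenspaces of $P$, so the nonzero spectrum of $P$ is symmetric about $0$ and $\ii^+(P)=\ii^-(P)=\tfrac12\,\textrm{rk}_\R(P)$. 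Second, expanding $Q(v_1+\ii v_2)=(Av_1+Bv_2)+\ii(Av_2-Bv_1)$ shows that the two real equations defining $\ker_\R P$ combine into the single complex equation $Q(v_1+\ii v_2)=0$, so $(v_1,v_2)\mapsto v_1+\ii v_2$ is an $\R$-linear isomorphism $\ker_\R P\simeq \ker_\C Q$; hence $\textrm{rk}_\R P=2\,\textrm{rk}_\C Q$ and the identity follows.

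Putting the two pieces together yields $\ii^+(p(\alpha))=\textrm{rk}_\C(q_\alpha)$ for every $\alpha\in S^{2l+1}$, so $\Omega^{j+1}$ and $P^{j+1}$ agree as subsets of $S^{2l+1}$ under the identification $\R^{2l+2}\simeq \C^{l+1}$. Substituting into the $E_2$-page of the spectral sequence of \cite{AgLe} then gives exactly the claim; everything after the rank identity is pure bookkeeping.
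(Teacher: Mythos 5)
Your proposal is correct and follows essentially the same route as the paper: the paper also reduces to Theorem~A of \cite{AgLe} by showing that complex conjugation carries $P^{j+1}$ onto $\Omega^{j+1}$, and it proves the key identity $2\ii^{+}(M)=\textrm{rk}_{\R}(M)=2\textrm{rk}_{\C}(A-\ii B)$ via the same two ingredients you use (the eigenspace swap $(u,v)\mapsto(-v,u)$, i.e.\ the anticommutation with $J$, and the kernel isomorphism $(u,v)\mapsto u+\ii v$). No substantive difference.
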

\begin{proof}
We will prove that for every $j$ the two sets $P^{j+1}$ and $\Omega^{j+1}$ are homeomorphic, and in fact if $\tau:\C^{l+1}\to \C^{l+1}$ denotes complex coniugation that we have
$$\tau(P^{j+1})=\Omega^{j+1}.$$
If we use the matrix notation for each $q_{j}$ we have $q_{j}(z)=z^{T} Q_{j}z $ for $Q_{j}\in \textrm{Sym}(n+1, \C)$ and writing $Q_{j}=A_{j}-iB_{j}$ with $A_{j},B_{j}\in \textrm{Sym}(n+1, \R)$
$$q_{j}^{a}(x,y)= \langle \left( \begin{smallmatrix}x\\y\end{smallmatrix}\right) , \left( \begin{smallmatrix} A_{j}&B_{j}\\B_{j}&-A_{j}\end{smallmatrix}\right)\left( \begin{smallmatrix}x\\y\end{smallmatrix}\right)\rangle,\quad q_{j}^{b}(x,y)= \langle \left( \begin{smallmatrix}x\\y\end{smallmatrix}\right) , \left( \begin{smallmatrix} -B_{j}&A_{j}\\A_{j}&B_{j}\end{smallmatrix}\right)\left( \begin{smallmatrix}x\\y\end{smallmatrix}\right)\rangle .$$
In particular notice that the matrix associated to the real quadratic form $a_{0}q_{0}^{a}+b_{0}q_{0}^{b}+\cdots +a_{l}q_{l}^{a}+b_{l}q_{l}^{b}$ is of the form $$M=\left( \begin{smallmatrix} A&B\\B&-A\end{smallmatrix}\right)$$ for $A,B \in \textrm{Sym}(n+1,\R).$ If $\lambda$ is an eigenvalue of $M$ and $V_{\lambda}$ is the corresponding eigenspace, then the map $(u,v)\mapsto (-v,u)$ gives an isomorphism $V_{\lambda}\simeq V_{-\lambda}$. This implies 
$$2\ii^{+}(M)=\textrm{rk}_{\R}(M).$$
On the other side it is easy to show that
$$\textrm{rk}_{\R}\left( \begin{smallmatrix} A&B\\B&-A\end{smallmatrix}\right)=2\textrm{rk}_{\C}(A-iB)$$
in fact the map $(u,v)\mapsto u+iv$ gives an isomorphism of real vector space $\ker(M)\simeq \ker(A-iB).$
Comparing now  the matrices associated to $a_{0}q_{0}^{a}+b_{0}q_{0}^{b}+\cdots +a_{l}q_{l}^{a}+b_{l}q_{l}^{b}$ and to $(a_{0}-ib_{0})q_{0}+\ldots+(a_{l}-ib_{l})q_{l}$ we get the result.

\end{proof}

\begin{remark}Even more natural than the sets $\{P^{j}\}_{j\in \N}$ are the sets
$$Y^{j}=\{[\alpha]\in \CP^{l},\, \alpha \in S^{2l+1} |\, \textrm{rk}(p(\alpha))\geq j\}.$$
If we consider the hopf bundle $S^{1}\to S^{2l+1}\stackrel{h}{\longrightarrow}\CP^{l}$
we see that $h(P^{j})=Y^{j}$ and thus $$P^{j}\neq S^{2l+1}\Rightarrow H^{*}(P^{j})=H^{*}(Y^{j})\otimes H^{*}(S^{1}).$$
In this way we see that it is possible to express all the data  for $E_{2}$ of the previous spectral sequence only in terms of the linear system $\mathbb{P}(\textrm{span}(q_{0},\ldots, q_{l}))\subset \mathbb{P}(\C[z_{0},\ldots, z_{n}]_{(2)})$.
\end{remark}
We recall now from \cite{AgLe} the following description for  the second differential of $(E_{r},d_{r})_{r\geq 0}$\\
For each $P\in \textrm{Sym}(2n+2,\R)$ we order the eigenvalues of $P$ in increasing way:
$$\lambda_{1}(P)\geq \cdots \geq \lambda_{2n+2}(P)$$
and we define $$D_{j}=\{\alpha \in S^{2l+1}\, |\, \lambda_{j}(p(\alpha))\neq \lambda_{j+1}(p(\alpha))\}.$$
Then there is a naturally defined bundle $\R^{j}\to L_{j}\to D_{j}$ whose fiber over  a point $\alpha \in D_{j}$ equals $(L_{j})_{\alpha}=\textrm{span}\{v\in \R^{2n+2}\, |\, p(\alpha)v=\lambda_{i} v,\, i=1, \ldots, j\}$ and whose vector bundle structure is given by the inclusion $L_{j}\hookrightarrow D_{j}\times \R^{2n+2}.$ We define $w_{1,j}\in H^{1}(D_{j})$ to be the first Stiefel-Whitney class of $L_{j}$ and 
$$\gamma_{1,j}=\partial^{*}w_{1,j}\in H^{2}(B, D_{j})$$ where $\partial^{*}:H^{1}(D_{j})\to H^{2}(B, D_{j})$ is the connecting isomorphism. In \cite{AgLe} it is proved that $d_{2}:H^{i}(B, \Omega^{j+1})\to H^{i+2}(B, \Omega^{j})$ is given by
$$d_{2}(x)=(x\smile \gamma_{1,j})|_{(B, \Omega^{j})}$$
(notice that $\Omega^{j}\subset \Omega^{j+1}\cup D_{j}$).\\
If we let $[\alpha_{0},\ldots,\alpha_{l}]=[a_{0}+ib_{0},\ldots, a_{l}+ib_{l}]\in \CP^{l}$ such that $\alpha=(a_{0},b_{0},\ldots, a_{l},b_{l})\in S^{2l+1}$ then $p|_{h^{-1}[\alpha_{0},\ldots, \alpha_{l}]}:S^{1}\to \textrm{Sym}(2n+2,\R)$ equals
$$\theta\mapsto  \left( \begin{smallmatrix} I \cos \theta&-I \sin \theta \\ I \sin \theta& I \cos \theta\end{smallmatrix}\right)p(\alpha)$$
as one can easily check. The following lemma is the main ingredient for the explicit computations of $d_{2}.$

\begin{lemma}\label{split}Let $A,B, I\in \emph{\textrm{Sym}}(n+1,\R),$ with $I$ the identity matrix, $R({\theta})=\left( \begin{smallmatrix} I \cos \theta&-I \sin \theta \\ I \sin \theta& I \cos \theta\end{smallmatrix}\right)$ and $M=\left( \begin{smallmatrix} A&B \\ B& -A\end{smallmatrix}\right).$ Let $c:S^{1}\to \emph{\textrm{Sym}}(2n+2,\R)$ be defined by
$$\theta \mapsto R({\theta })M.$$
Consider the bundle $c^{*}L$ over $S^{1}$ whose fibre at the point $\theta\in S^{1}$ is $$(c^{*}L)_{\theta}=\emph{\textrm{span}}\{w\in \R^{2n+2}\,|\, \exists \lambda >0 \, :\, c(\theta)w=\lambda w\} $$ and whose vector bundle structure is given by its inclusion in $S^{1}\times \R^{2n+2}.$ Then the following holds for the first Stiefel-Whitney class of $c^{*}L:$
$$w_{1}(c^{*}L)=\emph{\textrm{rk}}_{\C}(A-iB)\,\emph{\textrm{mod}}\,2.$$
\end{lemma}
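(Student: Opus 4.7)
The plan is to realize $c(\theta)$ as an orthogonal conjugate of $M$ by a ``half-angle'' rotation, build an explicit global frame for $c^{*}L$ over the interval $[0,2\pi]$, and read off the first Stiefel-Whitney class from the monodromy at the identified endpoints.

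First I would set $J_{0} = \left(\begin{smallmatrix} 0 & -I \\ I & 0 \end{smallmatrix}\right)$, so that $R(\theta) = \cos\theta\, I + \sin\theta\, J_{0}$, and verify the anticommutation $MJ_{0} = -J_{0}M$, which is immediate from the block form of $M$. Pulling $J_{0}$ through $M$ in $MR(-\theta/2)$ then gives $MR(-\theta/2) = R(\theta/2)M$, whence
$$c(\theta) = R(\theta)M = R(\theta/2)\,M\,R(\theta/2)^{-1}.$$
In particular $c(\theta)$ is orthogonally similar to $M$ for every $\theta$, so its spectrum is constant in $\theta$ and its positive eigenspace is $(c^{*}L)_{\theta} = R(\theta/2)\,V_{+}$, where $V_{+}$ denotes the positive eigenspace of $M$. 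By the identities $2\ii^{+}(M) = \textrm{rk}_{\R}(M) = 2\,\textrm{rk}_{\C}(A-iB)$ established in the proof of Theorem~\ref{teospec}, we have $r := \dim V_{+} = \textrm{rk}_{\C}(A-iB)$.

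Next I would compute $w_{1}$ via a clutching construction. Choose any basis $v_{1},\ldots,v_{r}$ of $V_{+}$; the sections $s_{i}(\theta) = R(\theta/2)v_{i}$ form a global frame of $c^{*}L$ over the interval $[0,2\pi]$. Since $R(\pi) = -I_{2n+2}$, at the identified endpoint one has $s_{i}(2\pi) = -v_{i} = -s_{i}(0)$, so $c^{*}L \to S^{1} = [0,2\pi]/{\sim}$ is presented in this frame by the clutching function $-\textrm{id}_{V_{+}}$. Its determinant line bundle therefore has clutching $(-1)^{r}$ and is nontrivial iff $r$ is odd; using the standard identity $w_{1}(c^{*}L) = w_{1}(\det c^{*}L)$ we conclude
$$w_{1}(c^{*}L) \equiv r = \textrm{rk}_{\C}(A-iB) \pmod{2},$$
which is the claim. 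The only non-routine input is spotting the half-angle conjugation $c(\theta) = R(\theta/2)MR(\theta/2)^{-1}$; once that is in place everything reduces to a single sign computation around the loop.
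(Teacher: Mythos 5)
Your proof is correct, and it takes a genuinely different (and somewhat slicker) route than the paper's. The paper exploits the same anticommutation phenomenon --- if $Mw=\lambda w$ then $MJw=-\lambda Jw$ --- but uses it to build an eigenbasis $\{w_{1},Jw_{1},\ldots,w_{n+1},Jw_{n+1}\}$ of $M$ that splits $\R^{2n+2}$ into two-dimensional $R(\theta)$-invariant planes; this decomposes $c^{*}L$ into a direct sum of line bundles, one for each $\lambda_{j}>0$, each of which is the M\"obius bundle because the corresponding block $\lambda_{j}\left(\begin{smallmatrix}\cos\theta & \sin\theta\\ \sin\theta & -\cos\theta\end{smallmatrix}\right)$ is a scaled reflection whose positive eigenline turns at half speed; summing first Stiefel--Whitney classes then gives $w_{1}(c^{*}L)=\#\{j\,:\,\lambda_{j}>0\}=\frac{1}{2}\textrm{rk}_{\R}(M)$. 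Your half-angle conjugation $c(\theta)=R(\theta/2)MR(\theta/2)^{-1}$ packages all of these M\"obius twists at once: it avoids diagonalizing $M$ and splitting the bundle, replaces the fibrewise eigenvalue analysis by a single orthogonal similarity (so the spectrum is visibly constant and the positive eigenspace is just $R(\theta/2)V_{+}$), and reduces the computation of $w_{1}$ to the sign of $\det(-\textrm{id}_{V_{+}})=(-1)^{r}$ at the clutching point. Both arguments ultimately rely on the same identities $2\,\ii^{+}(M)=\textrm{rk}_{\R}(M)=2\,\textrm{rk}_{\C}(A-iB)$ from the proof of Theorem \ref{teospec} to convert $\dim V_{+}$ into $\textrm{rk}_{\C}(A-iB)$. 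The only points worth making explicit in your write-up are that $R(\theta/2)$ is orthogonal, so the similarity preserves symmetry and the real spectral decomposition, and that $c(2\pi)=c(0)$, so the clutching description over $[0,2\pi]/\!\sim$ is legitimate; both are immediate.
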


\begin{proof}
First notice that if $w=\left( \begin{smallmatrix} u \\v \end{smallmatrix}\right)$ is an eigenvector of $\left( \begin{smallmatrix} A&B \\ B& -A\end{smallmatrix}\right)$ for the eigenvalue $\lambda,$ then $Jw=\left( \begin{smallmatrix} v \\-u \end{smallmatrix}\right)$ is an eigenvector for the eigenvalue $-\lambda.$ It follows that there exists a basis $\{w_{1},Jw_{1},\ldots, w_{n+1},Jw_{n+1}\}$ of $\R^{2n+2}$ of eigenvectors of $\left( \begin{smallmatrix} A&B \\ B& -A\end{smallmatrix}\right)$ such that $\left( \begin{smallmatrix} A&B \\ B& -A\end{smallmatrix}\right)w_{j}=\lambda_{j}w_{j}$ with $\lambda_{j}\geq 0.$ Let now $W_{j}=\textrm{span}\{w_{j},Jw_{j}\}.$ Then $W_{j}$ is $R(\theta)$-invariant:
$R(\theta)w_{j}=\cos \theta w_{j}-\sin \theta Jw_{j}$ and $R(\theta)Jw_{j}=\sin \theta w_{j}+\cos \theta Jw_{j}.$
Thus, using the above basis, we see that $R(\theta)$ is congruent to
$$M^{T}R(\theta)M=\textrm{diag}(D_{1}(\theta),\ldots, D_{n+1}(\theta)),\,\,\,D_{j}(\theta)=\lambda_{j }\left( \begin{smallmatrix} \cos \theta&\sin \theta \\ \sin \theta& -\cos \theta\end{smallmatrix}\right)$$
If $c_{j}:\theta \mapsto D_{j}(\theta),$ then clearly we have the splitting $c^{*}L=c_{1}^{*}L\oplus\cdots \oplus c_{n+1}^{*}L.$
Since $w_{j}(c^{*}L)=0$ if and only if $\lambda_{j}=0,$ then
$$w_{1}(c^{*}L)=\frac{1}{2}\textrm{rk}_{\R}\left( \begin{smallmatrix} A&B \\ B& -A\end{smallmatrix}\right)=\textrm{rk}_{\C}(A-iB)$$
where the last equality comes from the proof of Theorem \ref{teospec}.
\end{proof}

\begin{coro}[The cohomology of one single quadric]
Let $q\in \C[z_{0},\ldots,z_{n}]_{(2)}$ be a quadratic form with $\emph{\textrm{rk}}(q)=\rho >0$ and $$C=V(q)\subset \emph{$\CP^{n}$}$$ Then the Betti numbers of $C$ are:
$$\rho \,\textrm{even:}\quad b_{j}(C)=\left\{ \begin{array}{ll} 0&\textrm{if $j$ is odd;}\\
1 &\textrm{if $j$ is even, $0\leq j\leq 2n-2,\, j\neq 2n-\rho$}\\
2& \textrm{if $j=2n-\rho$}\end{array}\right.$$
$$
\rho \,\textrm{odd:}\quad b_{j}(C)=\left\{ \begin{array}{ll} 0&\textrm{if $j$ is odd;}\\
1 &\textrm{if $j$ is even, $0\leq j\leq 2n-2$}
\end{array}\right.
$$

\end{coro}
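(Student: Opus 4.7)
The plan is to apply the spectral sequence of Theorem~\ref{teospec} to $C=V(q)$ and then invert Leray--Hirsch via the formula $b_{j}(C)=\sum_{k=0}^{j}(-1)^{k}b_{j-k}(R)$. With $l=0$, the sphere $S^{2l+1}=S^{1}$ and every $\alpha\in S^{1}$ multiplies $q$ by a non-zero complex scalar, so $\textrm{rk}_{\C}(\alpha q)=\rho$ identically; hence
\[
P^{j+1}=\begin{cases}S^{1}& \text{if }j+1\leq\rho,\\ \emptyset& \text{if }j+1>\rho.\end{cases}
\]
Since $B$ is contractible this forces $E_{2}^{i,j}=H^{i}(B,P^{j+1};\Z_{2})$ to be $\Z_{2}$ at $(2,j)$ for $0\leq j\leq\rho-1$, to be $\Z_{2}$ at $(0,j)$ for $j\geq\rho$, and to vanish in all other positions. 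Only two columns are populated, so $d_{r}=0$ automatically for $r\geq 3$; the single candidate non-zero differential is
\[
d_{2}\colon E_{2}^{0,\rho}\longrightarrow E_{2}^{2,\rho-1},\qquad x\mapsto x\smile\gamma_{1,\rho}.
\]

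To compute this $d_{2}$ I would invoke Lemma~\ref{split}. Because $\CP^{l}$ collapses to a single point, the Hopf fibre of interest is all of $D_{\rho}=S^{1}$, and the lemma identifies $w_{1,\rho}\in H^{1}(S^{1};\Z_{2})$ with $\textrm{rk}_{\C}(q)\bmod 2=\rho\bmod 2$. Under the connecting isomorphism $\partial^{*}\colon H^{1}(S^{1})\to H^{2}(B,S^{1})$ the class $\gamma_{1,\rho}$, and with it $d_{2}$, is therefore non-trivial precisely when $\rho$ is odd; in that case cup product with $\gamma_{1,\rho}$ is an isomorphism between the two copies of $\Z_{2}$ and annihilates both entries.

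Reading off $E_{\infty}^{i,j}\Rightarrow H_{2n+1-i-j}(R;\Z_{2})$ (extensions are automatic over a field), for $\rho$ even one gets $b_{k}(R)=1$ throughout $0\leq k\leq 2n-1$ with two exceptional overlapping degrees $b_{2n-\rho}(R)=b_{2n+1-\rho}(R)=2$, while for $\rho$ odd the cancellation leaves $b_{k}(R)=1$ uniformly on $[0,2n-1]$. Feeding this into the Leray--Hirsch recursion $b_{j}(R)=b_{j}(C)+b_{j-1}(C)$ and solving by induction on $j$ yields the two cases of the statement. The only real obstacle is the identification of $w_{1,\rho}$ by Lemma~\ref{split}, which is what converts the parity of the complex rank of $q$ into the (non)vanishing of $d_{2}$; everything else is formal manipulation and the degree bookkeeping required near $k=2n-\rho$.
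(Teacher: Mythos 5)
Your proposal is correct and follows essentially the same route as the paper: the same collapse of the $E_{2}$ page into two columns coming from the constancy of the rank $\rho$, the same identification of the only candidate differential $d_{2}^{0,\rho}$ with $\rho\bmod 2$ via Lemma~\ref{split}, and the same inversion of the Leray--Hirsch relation $b_{j}(R)=b_{j}(C)+b_{j-1}(C)$ to recover $b_{*}(C)$. The only cosmetic difference is that you describe the $E_{2}$ page through the sets $P^{j+1}$ rather than through the constancy of $\ii^{+}$, which is equivalent by Theorem~\ref{teospec}.
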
 
\begin{proof}
We first compute $H^{*}(R)$ using Theorem \ref{teospec}: in this case if $Q=A-iB,$ then $R$ is the intersection of the two quadrics defined by the symmetric matrices $P=\left( \begin{smallmatrix} A&B \\ B& -A\end{smallmatrix}\right)$ and $-JP=\left( \begin{smallmatrix} B&A \\ A& -B\end{smallmatrix}\right)$ and $p:S^{1}\to \textrm{Sym}(2n+2,\R)$ equals $\theta \mapsto R_{\theta} \left( \begin{smallmatrix} A&B \\ B& -A\end{smallmatrix}\right).$ The function $\ii^{+}$ has constant value $\rho$ and thus the $E_{2}$ table for $R$ has the following picture:
$$
\begin{array}{c|c|c|c|}
2n+1&\Z_{2}&0&0\\
\hline
&\vdots&\vdots&\vdots\\
\rho&\Z_{2}&0&0\\
\hline
&0&0&\Z_{2}\\
&\vdots&\vdots&\vdots\\
&0&0&\Z_2\\
\hline
\end{array}
$$
The only (possibly) nonzero differential is 
$$d_{2}:E_{2}^{0,\rho}\to E_{2}^{2, \rho-1}$$
which by the previous discussion equals $1\mapsto\partial ^{*}w_{1}(p^{*}L).$ Lemma \ref{split} implies now
$$d_{2}(1)=\rho\, \textrm{mod}\, 2 $$
Thus if $\rho$ is even $E_{2}=E_{\infty}$ and if $\rho$ is odd the $(\rho-1)$-th and the $\rho$-th row of $E_{3}=E_{\infty}$ are zero. Applying the formula $b_{j}(C;\Z_{2})=\sum_{k=0}^{j}(-1)^{k}b_{j-k}(R;\Z_{2})$ gives the result.

\end{proof}
Using the previous spectral sequence we can easily compute the rank of the map induced on the $\Z_{2}$-cohomology by the inclusion
$$i_{C}:C\hookrightarrow \CP^{n}.$$ We recall that from Theorem C of \cite{AgLe} we have $\textrm{dim}(E_{\infty}^{0,2n+1-k})=\textrm{rk}(i_R^{*})_{k};$ thus applying Lemma \ref{coogen} we get the following.

\begin{teo}\label{coo}For every $k$ we have
$$\emph{\textrm{rk}}(i_{C}^{*})_{2k}=\emph{\textrm{dim}}E_{\infty}^{0, 2n+1-2k}$$
and the zeroth column of $E_{\infty}$ must be the following:
$$
E_{\infty}^{0,*}=\begin{array}{|c|}
\hline
\Z_{2}\\

\vdots\\

\Z_{2}\\
\hline
0\\

\vdots \\

0\\
\hline
\end{array}
$$
where the number of $\Z_{2}$ summand is an even number $r+1,$ and $r$ is that given by Lemma \ref{coogen}.
\end{teo}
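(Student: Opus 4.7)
The plan is to derive both parts of the theorem directly from Theorem C of \cite{AgLe}, which identifies $\dim E_{\infty}^{0,2n+1-k}$ with $\textrm{rk}(i_{R}^{*})_{k}$, and Lemma \ref{coogen}, which relates $(i_R^*)_k$ to $(i_C^*)_{2k}$ and controls the kernel of $(i_R^*)_k$ in a single parameter $r$.

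First, for the equality $\textrm{rk}(i_{C}^{*})_{2k}=\dim E_{\infty}^{0,2n+1-2k}$, I would simply chain the two cited facts: Theorem C of \cite{AgLe} gives $\dim E_{\infty}^{0,2n+1-2k}=\textrm{rk}(i_{R}^{*})_{2k}$, and Lemma \ref{coogen} gives $\textrm{rk}(i_{R}^{*})_{2k}=\textrm{rk}(i_{C}^{*})_{2k}$. This is essentially a one-line argument and poses no difficulty.

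For the description of the zeroth column, I would again apply Theorem C of \cite{AgLe} with $k$ arbitrary, getting $\dim E_{\infty}^{0,2n+1-k}=\textrm{rk}(i_{R}^{*})_{k}$, and then split into two cases using the $r$ supplied by Lemma \ref{coogen}. For $k\leq r$, injectivity of $(i_{R}^{*})_{k}$ combined with $\dim H^{k}(\RP^{2n+1};\Z_{2})=1$ (valid for $0\leq k\leq 2n+1$) gives rank one, so $E_{\infty}^{0,2n+1-k}\simeq \Z_{2}$; for $k>r$, the vanishing of $(i_{R}^{*})_{k}$ forces $E_{\infty}^{0,2n+1-k}=0$. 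Reindexing, the nonzero entries fill exactly the top $r+1$ rows of the zeroth column.

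Finally, to conclude that $r+1$ is even, I invoke the explicit assertion of Lemma \ref{coogen} that $r$ is odd. The only mildly delicate point to double-check is that $r\leq 2n+1$ so that the injectivity range actually lies within the cohomology of $\RP^{2n+1}$; but this is automatic because $H^{k}(\RP^{2n+1};\Z_{2})=0$ for $k>2n+1$, which forces any such $r$ to satisfy this bound (if it did not, one could replace it by $2n+1$, which is itself odd). No step is really an obstacle here: the theorem is a bookkeeping consequence of the two tools already established, so the proof reduces to transporting the information of Lemma \ref{coogen} through the identification given by Theorem C of \cite{AgLe}.
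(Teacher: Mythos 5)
Your proposal is correct and follows exactly the paper's own (very terse) argument: the author simply remarks before the theorem that Theorem C of \cite{AgLe} gives $\dim E_{\infty}^{0,2n+1-k}=\textrm{rk}(i_{R}^{*})_{k}$ and that the statement then follows by applying Lemma \ref{coogen}, which is precisely your chain of identifications together with the injectivity/vanishing dichotomy at the odd threshold $r$. Nothing is missing; your extra remark about $r\leq 2n+1$ is a harmless bit of diligence the paper leaves implicit.
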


Notice in particular that $E_{\infty}^{0,2a}=\Z_{2}$ iff $E_{\infty}^{0,2a+1}=\Z_{2}.$

\section{The intersection of two complex quadrics}

We apply here the previous result to compute the cohomology of the intersection of two complex quadrics:
$$C=V(q_{0},q_{1})\subset \CP^{n}.$$
We define $\Sigma_{j}=\{[\alpha]\in \CP^{1}\, |\, \textrm{rk}(\alpha_{0}q_{0}+\alpha_{1}q_{1})\leq j-1\};$ for $j\leq \mu=\max \ii^{+}$
 we see that $\Sigma_{j}$ consists of a finite number of points (it is a proper algebraic subset) $\Sigma_{j}=\{[\alpha_{1}],\ldots [\alpha_{\sigma_{j}}]\},$ where we have set 
 $$\sigma_{j}=\textrm{card} (\Sigma_{j}), \quad j\leq \mu.$$
 The discussion of the previous sections implies that for every $[\alpha]\in \CP^{1}$ the function $\ii^{+}$ is constant on the circle $h^{-1}[\alpha]\subset S^{3}$ with value
 $$\ii^{+}|_{h^{-1}[\alpha]}\equiv \textrm{rk}(\alpha_{0}q_{0}+\alpha_{1}q_{1})=\rho([\alpha]).$$
 Thus it is defined the bundle $\R^{\rho([\alpha])}\to L_{[\alpha]}\to h^{-1}[\alpha]$ of positive eigenspace of $p|_{h^{-1}[\alpha]}$ and Lemma \ref{split} implies
 $$w_{1}(L_{[\alpha]})=\rho([\alpha])\, \textrm{mod}\, 2.$$
 Fore every $[\alpha]\in \CP^{1}$ we let $m_{[\alpha]}$ be the multiplicity of $[\alpha]$ as a solution of $\det(\alpha_{0}Q_{0}+\alpha_{1}Q_{1})=0;$
 notice that in general $n+1-\rho ([\alpha])\neq m_{[\alpha]}.$\\ For every $j\in \N$ we see that
 $$\Omega^{j+1}=S^{3}\backslash h^{-1}(\Sigma_{j+1})$$
 If we let $\nu$ be the minimum of $\ii^{+}$ over $S^{3},$ we see that for $i>0$ and $\nu+1\leq j+1\leq \mu$
 $$E_{2}^{i,j}=H^{i}(B, S^{3}\backslash h^{-1}(\Sigma_{j+1}))\simeq \tilde{H}_{3-i}( h^{-1}(\Sigma_{j+1}))=\left\{ \begin{array}{ll} 0&\textrm{if $i\neq2,3$;}\\
\Z_{2}^{\sigma_{j+1}} &\textrm{if $i=2$}\\
\Z_{2}^{\sigma_{j+1}-1}& \textrm{if $i=3$}\end{array}\right.$$
This gives the following picture for the table of ranks of $E_{2}:$

$$
\begin{array}{c|c|c|c|c|c|}
2n+1&1&0&&&\\
&\vdots&\vdots&&&\\
&1&0&&&\\
\mu&1&0&&&\\
\hline
\mu-1&0&0&\sigma_{\mu}&\sigma_{\mu}-1&0\\
&\vdots&\vdots&\vdots&\vdots&\vdots\\
\nu&0&0&\sigma_{\nu+1}&\sigma_{\nu+1}-1&0\\
\hline
&0&0&0&0&1\\
&\vdots&\vdots&\vdots&\vdots&\vdots\\
&0&0&0&0&1\\
\hline

\end{array}
$$

We proceed now with the computation of the second differential; the only two possibly nonzero differential are $d_{2}^{0,\mu}$ and $d_{2}^{2, \nu},$ for which the following theorem holds; for an integer $m$ we let $\overline{m}\in \Z_{2}$ be its residue modulo $2.$

\begin{teo}\label{d2}The following formula holds for the differential $d_{2}^{2, \nu}:\Z_{2}^{\sigma_{\nu+1}}\to \Z_{2}$
$$d_{2}^{2, \nu}(x_{1}, \ldots, x_{\sigma_{\nu+1}})=\overline{\nu} \sum_{k=1}^{\sigma_{\nu+1}} x_{k}.$$
Moreover in the case $\mu=n+1,$ we also have the following explicit expression for $d_{2}^{0,n+1}:\Z_{2}\to \Z_{2}^{\sigma_{n+1}}$
$$d_{2}^{0, n+1}(x)=x(\overline{m}_{1}, \ldots, \overline{m}_{\sigma_{\mu}})$$
where $m_{k}=m_{[\alpha_{k}]}.$

\end{teo}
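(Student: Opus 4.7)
Both formulas are instances of $d_{2}(x)=(x\smile\gamma_{1,j})|_{(B,\Omega^{j})}$ with $\gamma_{1,j}=\partial^{*}w_{1}(L_{j})$, so the computation reduces to evaluating the first Stiefel--Whitney class of the positive eigenspace bundle on $1$-cycles in $S^{3}$. Throughout I use the identification $\Omega^{j+1}=S^{3}\setminus h^{-1}(\Sigma_{j+1})$ together with the Alexander-duality isomorphism $H^{2}(B,\Omega^{j+1})\simeq\tilde{H}_{1}(h^{-1}(\Sigma_{j+1}))=\Z_{2}^{\sigma_{j+1}}$, whose natural basis $\{x_{k}\}$ is dual to the circles $h^{-1}[\alpha_{k}]$; moreover $E_{2}^{0,n+1}\simeq\Z_{2}$ is generated by $1$ because the hypothesis $\mu=n+1$ forces $\Omega^{n+2}=\emptyset$.

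\textbf{The first formula.} Since $\nu=\min\ii^{+}$ one has $\Omega^{\nu}=S^{3}$, so the codomain $H^{4}(B,S^{3};\Z_{2})$ is $\Z_{2}$. By excision one represents $x_{k}$ by a cocycle supported in an arbitrarily small tubular neighbourhood $N_{k}$ of $h^{-1}[\alpha_{k}]$, and the pairing of $x_{k}\smile\gamma_{1,\nu}$ with the fundamental class of $(B,S^{3})$ equals $\langle w_{1,\nu},[h^{-1}[\alpha_{k}]]\rangle$ by naturality of the connecting homomorphism together with the Alexander-duality identification of $x_{k}$ with $[h^{-1}[\alpha_{k}]]$. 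Along this fiber the map $p$ is $\theta\mapsto R(\theta)p(\alpha_{k})$, and the combination $[\alpha_{k}]\in\Sigma_{\nu+1}$ with the minimality of $\nu$ forces $\rho([\alpha_{k}])=\nu$ exactly, so Lemma \ref{split} gives $w_{1,\nu}|_{h^{-1}[\alpha_{k}]}=\overline{\nu}$. Linearity then produces $d_{2}^{2,\nu}(x_{1},\dots,x_{\sigma_{\nu+1}})=\overline{\nu}\sum_{k}x_{k}$.

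\textbf{The second formula.} When $\mu=n+1$, the condition $\rho(\alpha)=n+1$ forces $\lambda_{n+1}(p(\alpha))>0>\lambda_{n+2}(p(\alpha))$, while $\rho(\alpha)\leq n$ forces both to vanish; hence $D_{n+1}=\Omega^{n+1}$, the restriction $H^{2}(B,D_{n+1})\to H^{2}(B,\Omega^{n+1})$ is the identity, and $d_{2}^{0,n+1}(1)=\gamma_{1,n+1}$. Its $k$-th coordinate in the basis $\{x_{k}\}$ is the evaluation of $w_{1}(L_{n+1})$ on a meridian $\ell_{k}$ of the circle $h^{-1}[\alpha_{k}]$ in $S^{3}$. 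I choose an affine coordinate $z$ on $\CP^{1}$ centered at $[\alpha_{k}]$ and a local section of the Hopf bundle so that the pencil restricts to $Q(z)=Q_{0}+zQ_{1}+\cdots$ with $\det Q(z)=z^{m_{k}}u(z)$ and $u(0)\neq 0$. In the rank-drop-one toy case $Q(z)=z^{m_{k}}$, an explicit diagonalization of the associated $M(z)$ produces the positive eigenvector $(\cos(m_{k}\theta/2),-\sin(m_{k}\theta/2))$ at $z=re^{i\theta}$, which returns to $(-1)^{m_{k}}$ times itself as $\theta$ runs from $0$ to $2\pi$ along $\ell_{k}$; hence $w_{1}(L_{n+1})|_{\ell_{k}}=\overline{m}_{k}$ in this case.

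\textbf{Main obstacle.} The delicate point is extending this local calculation past the corank-one regime: at a corank-$d$ degeneration the vanishing $z^{m_{k}}$ of the determinant distributes among $d$ eigenvalue chains of $M(z)$, so the monodromy on $\ell_{k}$ becomes a composite of local contributions. My plan is to produce a block-diagonal local normal form for the pencil so that $L_{n+1}$ splits into $d$ smooth subbundles, each of the rank-one form with exponent $a_{i}\geq 1$ and $\sum a_{i}=m_{k}$, reducing the monodromy to a product whose total parity is $m_{k}\,\textrm{mod}\,2$. Alternatively, one can deform the pencil generically so that $[\alpha_{k}]$ breaks into $m_{k}$ distinct simple roots and argue that $w_{1}$ on a meridian encircling the whole cluster is locally constant in the space of admissible pencils, yielding the same mod-$2$ answer.
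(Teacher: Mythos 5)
Your treatment of $d_{2}^{2,\nu}$ is correct and essentially identical to the paper's: both arguments localize $\gamma_{1,\nu}$ near $h^{-1}(\Sigma_{\nu+1})$, observe that minimality of $\nu$ forces $\textrm{rk}(p(\beta_{k}))=\nu$ on each of those circles, and then read off $w_{1}$ from Lemma \ref{split}. No issues there.

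The second formula is where your proposal has a genuine gap, and you say so yourself: the computation is only carried out in the corank-one, multiplicity-$m_{k}$ toy model $Q(z)=z^{m_{k}}$, and the general case is left as a ``plan.'' Moreover the plan as stated does not quite work. If the rank of the pencil drops by $d$ at $[\alpha_{k}]$, the local (Segre/Jordan) structure splits the pencil into $d$ blocks of sizes $a_{1},\ldots,a_{d}$ with $\sum a_{i}=m_{k}$, but a block of size $a_{i}>1$ is an $a_{i}\times a_{i}$ matrix whose determinant vanishes to order $a_{i}$ --- it is \emph{not} the $1\times 1$ block $z^{a_{i}}$, so your explicit eigenvector monodromy $(\cos(a_{i}\theta/2),-\sin(a_{i}\theta/2))$ does not apply to it; each such block requires a separate computation (e.g. $Q_{0}=\left(\begin{smallmatrix}0&0\\0&1\end{smallmatrix}\right)$, $Q_{1}=\left(\begin{smallmatrix}0&1\\1&0\end{smallmatrix}\right)$ already falls outside your toy case). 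Your alternative route --- perturbing to $m_{k}$ simple roots and invoking local constancy of $w_{1}$ on a meridian enclosing the whole cluster --- is sound in outline but also not proved: you would need to check that the meridian stays inside $D_{n+1}$ throughout the deformation. The paper avoids all of this by quoting from \cite{AgLe} the identity $w_{1,n+1}=p^{*}\textrm{lk}(\,\cdot\,,\{\lambda_{n+1}=\lambda_{n+2}\})$, which converts the monodromy of the positive eigenbundle into a linking number in the space of symmetric matrices; the coefficient $\overline{m}_{k}$ then comes out as the local intersection multiplicity of $p$ with the discriminant, i.e. the order of vanishing of $\det(\alpha_{0}Q_{0}+\alpha_{1}Q_{1})$ at $[\alpha_{k}]$, with no case analysis on the corank. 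To complete your argument you would either need to import that identity or actually carry out one of your two proposed reductions.
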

\begin{proof}
We start with
$$d_{2}:E_{2}^{2,\nu}\simeq \tilde{H}^{1}(h^{-1}(\Sigma_{\nu+1}))\to E_{2}^{4, \nu-1}=\Z_{2}$$
which is given by $x\mapsto (x\smile\gamma_{1,\nu})|_{(B, \Omega^{\nu})}.$ In order to do that we choose a small neighborhood $U(\eps)$ of $\Sigma_{\nu+1}=\{[\beta_{1}], \ldots, [\beta_{\sigma_{\nu+1}}]\}$ and we define $C(\eps)=h^{-1}(U(\eps)).$ If we set $\gamma_{1, \nu}(\eps)=\gamma_{1,\nu}|_{(B, C(\eps))},$ then since $C(\eps)\cup \Omega^{\nu+1}=\Omega^{\nu}=S^{3},$
$$d_{2}^{2, \nu}(x)=(x\smile \gamma_{1,\nu}(\eps))|_{(B, S^{3})}.$$
We let $\partial^{*}c_{1},\ldots, \partial^{*}c_{\sigma_{\nu+1}}$ be the generators of $H^{2}(B,C(\eps))\stackrel{\partial^{*}}{\simeq}H^{1}(C(\eps)),$ where $c_{k}$ is the dual of $h^{-1}[\beta_{k}],\, k=1, \ldots, \sigma_{\nu+1}.$ Lemma \ref{split} implies now that $w_{1}(L_{[\beta_{i}]})=\nu\, \textrm{mod}\, 2$ because $\nu=\min \ii^{+}=\textrm{rk}(p(\beta_{k}))$ for every $k=1, \ldots, \sigma_{\nu+1}.$ It follows that
$$\gamma_{1, \nu}(\eps)=\overline{\nu}\sum_{k=1}^{\sigma_{\nu+1}}\partial^{*}c_{k}.$$
If we let now $\partial^{*}g_{1}, \ldots, \partial^{*}g_{\sigma_{\nu+1}}$ be the generators of $H^{2}(B, \Omega^{\nu+1})\stackrel{\partial^{*}}{\simeq}H^{1}(\Omega^{\nu+1}),$ where $g_{k}=\textrm{lk}(\,\cdot\,, h^{-1}[\beta_{k}]),\, k=1, \ldots, \sigma_{\nu+1},$ we have the following formula
$$d_{2}^{2, \nu}(x)=\overline{\nu}\sum_{k={1}}^{\sigma_{\nu+1}}x^{k}\quad\quad x=\sum_{k=1}^{\sigma_{\nu+1}}x^{k}\partial^{*}g_{k}.$$
We assume now that $\mu=n+1$ and we compute $$d_{2}:E_{2}^{0,n+1}\simeq \Z_{2}\to E_{2}^{2, n}\simeq \tilde{H}_{1}(h^{-1}(\Sigma_{n+1})).$$
Consider thus $\Sigma_{n+1}=\{[\alpha_{1}],\ldots, [\alpha_{\sigma_{n+1}}]\}$ and let $f_{1},\ldots, f_{\sigma_{n+1}}$ be the generators of $\tilde{H}^{1}(S^{3}\backslash h^{-1}(\Sigma_{n+1})):$
$$f_{k}(c)=\textrm{lk}(c, h^{-1}[\alpha_{k}])\quad \forall c\in \tilde{H}_{1}(S^{3}\backslash h^{-1}(\Sigma_{n+1})).$$
In this way we have
$$H^{2}(B, \Omega^{n+1})=\langle \partial^{*}f_{1},\ldots, \partial^{*}f_{\sigma_{n+1}}\rangle.$$
It is shown in \cite{AgLe} that $$w_{1,n+1}=p^{*}\textrm{lk}(\, \cdot\,, \{\lambda_{n+1}=\lambda_{n}\})$$
In our case $p^{-1}\{\lambda_{n+1}=\lambda_{n+2}\}=h^{-1}(\Sigma_{n+1}):$ if $\alpha \notin h^{-1}(\Sigma_{n+1}),$ then $\textrm{rk}(p(\alpha))=n+1$ and thus $\ii^{+}(p(\alpha))=n+1$ and $\lambda_{n+1}(p(\alpha))>\lambda_{n+2}(p(\alpha));$ on the contrary if $\alpha\in h^{-1}(\Sigma_{n+1}),$ then $\textrm{rk}(p(\alpha))\leq n$ and $\lambda_{n+1}(p(\alpha))=\lambda_{n+2}(p(\alpha))=0.$ Since $\gamma_{1, n+1}=\partial^{*}w_{1,n+1},$ then we have
$$d_{2}^{0,n+1}(1)=\gamma_{1, n+1}=\sum_{k=1}^{\sigma_{n+1}}\overline{m}_{k}\partial^{*} f_{k}$$
where $m_{k}=m_{[\alpha_{k}]}$ comes from the fact that we are taking the pull-back of the class $\textrm{lk}(\, \cdot\,, \{\lambda_{n+1}=\lambda_{n+2}\})$ through $p$ and multiplicities have to be taken into account.\\

\end{proof}

\begin{remark}
Notice that if $\mu=n+1$ and $\nu=n, $ then $$d_{2}^{2,n}\circ d_{2}^{0, n+1}(1)=\overline{n}\sum_{i}\overline{m_{i}}=\overline{n(n+1)}=0.$$ 
\end{remark}

\begin{remark}
Consider the bundle $\R^{\mu}\to L_{\mu}\to D_{\mu}$ as defined in the second section and its projectivization $\RP^{\mu-1}\to P_{\mu}\stackrel{q}{\rightarrow} D_{\mu}.$ Since $L_{\mu}\subset D_{\mu}\times \R^{2n+2},$ then $P_{\mu}\subset D_{\mu}\times \RP^{2n+1}$ and the restriction of the projection on the second factor 
$$l:P_{\mu}\to \RP^{2n+1}$$ is a map which is a linear embedding on the fibres. It is not difficult to prove that for this map we have $\textrm{rk}(l^{*})_{k}\leq 1-\textrm{rk}({i_{R}}{*})_{2n+1-k}$ (see \cite{AgLe}). Thus by Theorem C of \cite{AgLe} we have the following implication:
$$\textrm{rk}(l^{*})_{k}=1 \Rightarrow E_{\infty}^{0,k}=0.$$
Using the fact that $l$ is linear on the fibres, we can compute $l^{*}x^{\mu}$ where $x$ is the generator of $H^{1}(\RP^{2n+1};\Z_{2})$ and $l^{*}x=y:$
$$y^{\mu}=(w_{1}(L_{\mu})y+w_{2}(L_{\mu}))y^{\mu-2}$$ where we interpret $w_{i}(L_{\mu})$ as a class on $P_{\mu}$ via $q^{*}.$
Thus we see that
$$(w_{1}(L_{\mu})\neq0\,\, \,\textrm{or}\,\,\, w_{2}(L_{\mu})\neq 0)\Rightarrow E_{\infty}^{0,\mu}=0.$$
Applying the same reasoning and computing $y^{k}$ for $k\geq \mu+1$ we get similar conditions for the vanishing of $E_{\infty}^{0,k}.$ Notice in particular that $w_{1}(L_{\mu})=w_{1,\mu},$ hence if it is nonzero $d_{2}^{0,\mu}$ also is nonzero and $E_{3}^{0,\mu}=0,$ which consequently gives $E_{\infty}^{0,\mu}=0.$ Such considerations suggest that higher differential $d_{r}^{0,*}$ for $(E_{r},d_{r})$ are closely related to higher characteristic classes.
\end{remark}

We get as a corollary of the previous theorem the following well known fact from plane geometry.

\begin{coro}The intersection of two quadrics in $\C\emph{P}^{2}$ consists of four points if and only if the associated pencil has exactly three singular elements.
\end{coro}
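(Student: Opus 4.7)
The plan is to specialise the machinery of the previous section to $n=2$ and exploit the fact that the discriminant $\det(\alpha_0 Q_0+\alpha_1 Q_1)$ is a binary form of degree $n+1=3$. Under this identification the singular elements of the pencil are precisely the points of $\Sigma_3$, so the Corollary is equivalent to the assertion $|C|=4\iff\sigma_3=3$.

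The real work is the generic case $\mu=3,\,\nu=2$, where the $E_2$-table of the preceding section is very sparse and only two of the $d_2$'s can possibly be non-zero. I would observe that $d_2^{2,2}$ vanishes because $\overline{\nu}=\overline{2}=0$, while by Theorem~\ref{d2} $d_2^{0,3}(1)=(\overline{m}_1,\ldots,\overline{m}_{\sigma_3})$, which is non-zero because $\sum_k m_k=3$ is odd; hence $d_2^{0,3}$ is injective. The only higher differentials that could still act on the zeroth column are $d_3^{0,4}$ and $d_4^{0,4}$, and if either were non-zero then $E_\infty^{0,*}$ would contain an odd number of copies of $\Z_2$, contradicting Theorem~\ref{coo}; hence both vanish. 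Reading off the total ranks on the anti-diagonals of $E_\infty$ and applying $b_j(C;\Z_2)=\sum_{k=0}^{j}(-1)^k b_{j-k}(R;\Z_2)$ would give $b_0(C)=\sigma_3+1$ and $b_j(C)=0$ for $j\geq 1$, so $C$ is a finite set of $\sigma_3+1$ points and $|C|=4\iff\sigma_3=3$.

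To complete the biconditional I would dispose of the degenerate cases. If $\nu=1$ the pencil contains a double line, which is a root of the cubic discriminant of multiplicity at least $2$, so $\sigma_3\leq 2$; the analogous spectral-sequence computation (with $d_2^{2,1}$ now non-zero since $\overline{1}=1$) yields $b_0(C)=\sigma_3\leq 2$ and neither side of the biconditional holds. If $\mu\leq 2$ the discriminant vanishes identically and the pencil has infinitely many singular elements, while the classical dichotomy for degenerate pencils of plane conics forces $C$ to be either a single point or to contain a line; again neither side of the biconditional holds.

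The main technical obstacle I anticipate is the vanishing of the higher differentials on the zeroth column; without the parity statement of Theorem~\ref{coo} one would need a direct geometric argument. Given that theorem, everything reduces to a bookkeeping exercise on the $E_2$-page together with the arithmetic observation that $\deg\det(\alpha_0 Q_0+\alpha_1 Q_1)=3$ is odd, which is precisely what forces $d_2^{0,3}$ to be non-zero in the generic case.
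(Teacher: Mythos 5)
Your proof is correct and follows essentially the same route as the paper: the $E_{2}$-table for the pencil, the vanishing of $d_{2}^{2,2}$ from $\overline{\nu}=\overline{2}=0$, and the parity constraint of Theorem \ref{coo} to force $d_{3}^{0,4}=d_{4}^{0,4}=0$, giving $b_{0}(C)=\sigma_{3}+1$ in the generic case and $b_{0}(C)\leq 2$ in the degenerate ones. The only differences are cosmetic: you organize the cases by $(\mu,\nu)$ rather than by the number of singular elements, you additionally exploit the injectivity of $d_{2}^{0,3}$ (from the oddness of $\sum_{k}m_{k}=3$) to show that all higher Betti numbers vanish in the generic case, and you settle the $\mu\leq 2$ case by classical plane geometry where the paper simply reads off the bound $b_{0}(R)\leq 1+c'+f\leq 2$ from the $E_{2}$-table.
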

\begin{proof}
Notice that a for a pencil of quadrics in $\CP^{2}$ generated by $Q_{0},Q_{1}$ the following four possibilities can happen for $$\{[\alpha_{0},\alpha_{1}]\in \CP^{1}\, |\, \det(\alpha_{0}Q_{0}+\alpha_{1}Q_{1})=0\}=\left\{ \begin{array}{ccc}\CP^{1}&(\infty)\\
\textrm{one point}& (1)\\
\textrm{two points}& (2)\\
\textrm{three points}& (3)\\

\end{array}\right.$$
The general table for the ranks of $E_{2}(R)$ has the following picture:

$$
\begin{array}{|c|c|c|c|c|}
\hline
1&0&0&0&0\\
1&0&0&0&0\\
1&0&0&0&0\\
a&0&c&c'&0\\
b&0&d&d'&f\\
0&0&e&e'&g\\

\hline

\end{array}
$$
Now $b_{0}(C)=b_{0}(R)\leq 1+c'+f$ and \begin{itemize}
\item [$(\infty)$]: $a=1,\,c=c'=f=0$ and $b_{0}(C)=1.$
\item [$(1),(2)$]: $a=b=0,\,c'=c-1\leq1,\, f\leq 1$ and $b_{0}(C)\leq 3.$
\item [$(3)$]: $a=b=0,\, c=3,\, c'=2,\, f=1$ and by Theorem \ref{d2} $d_{2}^{2,2}$ is identically zero ($\nu=2$ is even); also, since $E_{\infty}^{0,5}=\Z_{2}$, Theorem \ref{coo} implies $E_{\infty}^{0,4}=\Z_{2}$ (the number of $\Z_{2}$ summands in $E_{\infty}^{0,*}$ is even); thus $d_{2}^{0,4}=d_{3}^{0,4}=d_{4}^{0,4}\equiv 0$ and $b_{0}(C)=4.$

\end{itemize}
It follows that
$$b_{0}(C)=4 \iff (3).$$
\end{proof}

\begin{example}[The complete intersection of two quadrics]
We recall from \cite{Reid} that the condition for $C=V(q_{0},q_{1})$ to be a complete intersection is equivalent to have $\mu=n+1,\, \sigma_{\mu}=n+1$ and $\nu=n.$ In other words the equation $\det (\alpha_{0}Q_{0}+\alpha_{1}Q_{1})=0$ must have $n+1$ distinct roots and at each root $[\alpha_{0},\alpha_{1}]$ the pencil must by simply degenerate, i.e. the rank of $\alpha_{0}Q_{0}+\alpha_{1}Q_{1}$ must be $n$ (notice in particular that for the case $n=2$ we have the above result).\\
Thus the table for the rank of $E_{2}$ is the following:

$$
\textrm{rk}(E_{2})=\begin{array}{c|c|c|c|c|c|}
2n+1&1&0&&&\\
&\vdots&\vdots&&&\\
&1&0&&&\\
n+1&1&0&&&\\
\hline
n&0&0&n+1&n&0\\
&0&0&0&0&1\\
&\vdots&\vdots&\vdots&\vdots&\vdots\\
&0&0&0&0&1\\
\hline

\end{array}
$$
We distinguish the two cases $n$ even and $n$ odd.
\begin{itemize}
\item [($n$ even)]: In this case, by Theorem \ref{d2}, $d_{2}^{0,n+1}$ is injective and $d_{2}^{2,n}$ is zero. Hence the table for the rank of $E_{3}$ is the following:
$$
\textrm{rk}(E_{3})=\begin{array}{c|c|c|c|c|c|}
2n+1&1&0&&&\\
&\vdots&\vdots&&&\\
&1&0&&&\\

&1&0&0&0&0\\
n+1&0&0&0&0&0\\
\hline
n&0&0&n&n&0\\
&0&0&0&0&1\\
&\vdots&\vdots&\vdots&\vdots&\vdots\\
&0&0&0&0&1\\
\hline

\end{array}
$$
Since $d_{3}^{0,n+3}=d_{4}^{0,n+3}=0$ then $E_{\infty}^{0,n+3}=\Z_{2};$ since $n$ is even, then by Theorem \ref{coo} we have $E_{\infty}^{0,n+2}=E_{\infty}^{0,n+3}=\Z_{2}$ and thus $d_{3}^{0,n+2}=d_{4}^{0,n+2}=0.$ This implies $$E_{3}=E_{\infty}.$$
Thus the $\Z_{2}$-Betti numbers of $R$ are:
$$
b_{j}(R)=\left\{ \begin{array}{cc} 1&\textrm{if $j\neq n-2,n-1,$ $0\leq j\leq 2n-1$;}\\
n+2 &\textrm{if $j=n-2,n-1$}
\end{array}\right.
$$
Consequently the $\Z_{2}$-Betti numbers of $C$ are:
$$
b_{j}(C)=\left\{ \begin{array}{cc} 0&\textrm{if $j$ is odd;}\\
1 &\textrm{if $j$ is even, $j\neq n-2$ and $0\leq j\leq 2n-2$}\\
n+2& \textrm{if $j=n-2$}
\end{array}\right.
$$
\item [($n$ odd)]: in this case, by Theorem \ref{d2}, $d_{2}^{0,n+1}$ is injective and $d_{2}^{2,n}$ is surjective. Thus the table for the rank of $E_{3}$ is the following:
$$\textrm{rk}(E_{3})=
\begin{array}{c|c|c|c|c|c|}
2n+1&1&0&&&\\
&\vdots&\vdots&&&\\
&1&0&&&\\

&1&0&0&0&0\\
n+1&0&0&0&0&0\\
\hline
n&0&0&n-1&n&0\\
&0&0&0&0&0\\
&0&0&0&0&1\\

&\vdots&\vdots&\vdots&\vdots&\vdots\\
&0&0&0&0&1\\
\hline

\end{array}
$$
Since $E_{\infty}^{0,n+1}=0$ and $n$ is odd, then by Theorem \ref{coo} we have $E_{\infty}^{0,n+2}=0$,  thus $d_{3}^{0,n+2}$ must be injective and the table of rank of $E_{4}=E_{\infty}$ must be the following:
 
$$\textrm{rk}(E_{4})=\textrm{rk}(E_{\infty})=
\begin{array}{c|c|c|c|c|c|}
2n+1&1&0&&&\\
&\vdots&\vdots&&&\\
&1&0&&&\\

&0&0&0&0&0\\
n+1&0&0&0&0&0\\
\hline
n&0&0&n-1&n-1&0\\
&0&0&0&0&0\\
&0&0&0&0&1\\

&\vdots&\vdots&\vdots&\vdots&\vdots\\
&0&0&0&0&1\\
\hline

\end{array}
$$
Thus the $\Z_{2}$-Betti numbers of $R$ are:
$$
b_{j}(R)=\left\{ \begin{array}{cc} 1&\textrm{if $j\neq n-2,n-1,$ $0\leq j\leq 2n-1$;}\\
n&\textrm{if $j=n-2,n-1$}
\end{array}\right.
$$
Consequently the $\Z_{2}$-Betti numbers of $C$ are:
$$
b_{j}(C)=\left\{ \begin{array}{cc} 0&\textrm{if $j$ is odd and $j\neq n-2$;}\\
1 &\textrm{if $j$ is even, $0\leq j\leq 2n-2$}\\
n-1 & \textrm{if $j=n-2$}
\end{array}\right.
$$

\end{itemize}
Thus the complete intersection of two quadrics $C$ in $\CP^{n}$ has complex dimension $m=n-2$ and its $m$-th Betti number is $m+4$ if $m$ is even and $m+1$ if $m$ is odd.
\end{example}
\begin{example}
Consider the two quadrics $$q_{0}(z_{0},z_{1},z_{2},z_{3})=z_{0}z_{2}-z_{1}^{2}\quad\textrm{and}\quad q_{1}(z_{0},z_{1},z_{2},z_{3})=z_{0}z_{3}-z_{1}z_{2}.$$
Then $\det(\alpha_{0}{Q_{0}}+\alpha_{1}Q_{1})=\alpha_{1}^{4}$ and $\textrm{rk}(\alpha_{0}Q_{0}+\alpha_{1}Q_{1})\equiv4$ except at the point $[1,0]$ where we have $\textrm{rk}(Q_{0})=3.$ Notice in this case that $\textrm{rk}(p([1,0]))\neq n+1-m_{[\alpha]}=4-m_{[\alpha]}=0.$
The table for the rank of $E_{2}$ has the following picture:
$$
\textrm{rk}(E_{2})=\begin{array}{|c|c|c|c|c|}
\hline
1&0&0&0&0\\
1&0&0&0&0\\
1&0&0&0&0\\
1&0&0&0&0\\
0&0&1&0&0\\
0&0&0&0&1\\
0&0&0&0&1\\
0&0&0&0&1\\

\hline

\end{array}
$$

Since $\mu=4=n+1,$ then we can use the previous formula for $d_{2}^{0,\mu}$ and since we have $m_{[1,0]}=4$ it follows $d_{2}^{0,4}\equiv 0.$ On the other hand $d_{2}^{2,4}$ is multiplication by $\nu=3 \, \textrm{mod}\, 2$, hence it is an isomorphism. Hence the table for the rank of $E_{3}$ has the following picture:

$$
\textrm{rk}(E_{3})=\begin{array}{|c|c|c|c|c|}
\hline
1&0&0&0&0\\
1&0&0&0&0\\
1&0&0&0&0\\
1&0&0&0&0\\
0&0&0&0&0\\
0&0&0&0&0\\
0&0&0&0&1\\
0&0&0&0&1\\

\hline

\end{array}
$$

Since $d_{3}^{0,5}=d_{4}^{0,5}\equiv 0,$ then $E_{\infty}^{0,5}=\Z_{2}$ and Theorem \ref{coo} implies also $E_{\infty}^{0,4}=\Z_{2}.$
Thus $E_{3}=E_{4}=E_{\infty}.$ Hence, for the only possible nonzero Betti numbers of $R$ we have 
$b_{0}(R)=b_{1}(R)=1,\, b_{2}(R)=b_{3}(R)=2.$ This implies the following for the Betti numbers of $C:$
$$b_{0}(C)=1,\, b_{2}(C)=2\quad \textrm{and}\quad  b_{i}(C)=0,\, i\neq 0,2.$$
Using Theorem \ref{coo} we see that $(i_{C}^{*})_{0}$ and $(i_{C})^{*}_{2}$ are injective.\\
Looking directly at the equations for $C$ we see that it equals the union of the skew-cubic and a (complex projective) line meeting at one point; thus topologically $C\sim S^{2}\vee S^{2}.$
\end{example}
\begin{example}
Consider the two quadrics $$q_{0}(z_{0},z_{1},z_{2})=z_{0}^{2}-z_{1}^{2}\quad\textrm{and} \quad q_{1}(z_{0},z_{1},z_{2})=2z_{0}(z_{1}+z_{2}).$$
We have $\det (\alpha_{0}Q_{0}+\alpha_{1}Q_{1})\equiv 0$ and $\textrm{rk}(\alpha_{0}Q_{0}+\alpha_{1}Q_{1})=2$ for every $[\alpha_{0},\alpha_{1}]\in \CP^{1}.$
Thus the table for the rank of $E_{2}$ is:

$$
\textrm{rk}(E_{2})=\begin{array}{|c|c|c|c|c|}
\hline
1&0&0&0&0\\
1&0&0&0&0\\
1&0&0&0&0\\
1&0&0&0&0\\
0&0&0&0&1\\
0&0&0&0&1\\

\hline

\end{array}
$$
By dimensional reasons, the only possibly nonzero differential is $d_{4}.$ Since $d_{4}^{0,2}=0,$ then $E_{\infty}^{0,2}=\Z_{2}$ and by Theorem \ref{coo} also $E_{\infty}^{0,3}=\Z_{2}.$
Since $E_{\infty}^{0,3}=\Z_{2},$ then $d_{4}^{0,3}=0;$ hence $E_{\infty}^{4,0}=\Z_{2}.$ On the other side we have $d_{4}^{0,5}=0$ and hence $E_{\infty}^{0,5}=\Z_{2};$ Theorem \ref{coo} implies $E_{\infty}^{0,4}=\Z_{2}.$ Since $E_{\infty}^{0,4}=\Z_{2},$ then $d_{4}^{0,4}=0$ and $E_{\infty}^{4,1}=\Z_{2}.$\\
All this tells us that $E_{\infty}=E_{2}.$ The only possible nonzero Betti numbers of $R$ are
$b_{0}(R)=b_{1}(R)=2,\, b_{2}(R)=b_{3}(R)=1.$ This implies the following for the Betti numbers of $C:$
$$b_{0}(C)=2,\, b_{2}(C)=1\quad \textrm{and}\quad  b_{i}(C)=0,\, i\neq 0,2.$$
Looking directly at the equations of $C$ we see that it equals the union of the point $[1,1,0]$ and the complex projective line $\{z_{0}+z_{1}=0\}.$ 
\end{example}

\end{document}